\begin{document}
  \begin{CJK*}{GBK}{kai}
\newtheorem{lemma}{Lemma}
\newtheorem{pron}{Proposition}
\newtheorem{rem}{Remark}
\newtheorem{thm}{Theorem}
\newtheorem{Corol}{Corollary}
\newtheorem{exam}{Example}
\newtheorem{defin}{Definition}
\newtheorem{remark}{Remark}
\newtheorem{property}{Property}
\newtheorem{assh}{Assumption}
\newtheorem{assd}{Assumption}
\newcommand{\rco}{\color{red}}
\newcommand{\la}{\frac{1}{\lambda}}
\newcommand{\sectemul}{\arabic{section}}
\renewcommand{\theequation}{\sectemul.\arabic{equation}}
\renewcommand{\thepron}{\sectemul.\arabic{pron}}
\renewcommand{\thelemma}{\sectemul.\arabic{lemma}}
\renewcommand{\therem}{\sectemul.\arabic{rem}}
\renewcommand{\thethm}{\sectemul.\arabic{thm}}
\renewcommand{\theCorol}{\sectemul.\arabic{Corol}}
\renewcommand{\theexam}{\sectemul.\arabic{exam}}
\renewcommand{\thedefin}{\sectemul.\arabic{defin}}
\renewcommand{\theremark}{\sectemul.\arabic{remark}}
\renewcommand{\theassh}{H\arabic{assh}}
\renewcommand{\theassd}{D\arabic{assd}}
\def\REF#1{\par\hangindent\parindent\indent\llap{#1\enspace}\ignorespaces}
\def\lo{\left}
\def\ro{\right}
\def\be{\begin{equation}}
\def\ee{\end{equation}}
\def\beq{\begin{eqnarray*}}
\def\eeq{\end{eqnarray*}}
\def\bea{\begin{eqnarray}}
\def\eea{\end{eqnarray}}
\def\d{\Delta_T}
\def\r{random walk}
\def\o{\overline}

\title{\large\bf The finite-time ruin probability of the nonhomogeneous Poisson risk model with conditionally independent subexponential claims\thanks{Research supported by National Natural Science Foundation of China
(No.s 11401415), and the Priority Academic Program Development of Jiangsu Higher Education
Institutions.}}
\author{\small
Hui Xu,~~ Fengyang Cheng
\thanks{Corresponding author.
Telephone: +86 512 65112637. Fax: +86 512 65112637. E-mail:
chengfy@suda.edu.cn}
\\
{\footnotesize\it School of Mathematical Sciences, Soochow
University, Suzhou 215006, China}}
\date{}

\maketitle

\begin{center}
{\noindent\small {\bf Abstract }}
\end{center}

{\small This paper obtains an asymptotic formula for the finite-time ruin probability of the compound nonhomogeneous Poisson risk model with a constant interest force, in which the claims are conditionally independent random variables with a common subexponential distribution. The paper also obtains some asymptotic relations of randomly weighted sums $\sum_{i=1}^n \theta_iX_i$, in which the weights $\theta_i$ $i=1,2,\cdots, n$ are nonnegative random variables which are bounded above and the primary random variables $X_i$, $i=1,2,\cdots,n$ are conditionally independent and follow subexponential distributions.
\medskip

{\it Keywords:} conditionally independent; nonhomogeneous Poisson process; subexponential distributions; finite-time ruin probabilities; randomly weighted sums.
\medskip

{\it AMS 2010 Subject Classification:} 62E20; 62P05.}

\section{Introduction}
\setcounter{thm}{0}\setcounter{Corol}{0}\setcounter{lemma}{0}\setcounter{pron}{0}\setcounter{equation}{0}
\setcounter{remark}{0}\setcounter{exam}{0}\setcounter{property}{0}\setcounter{defin}{0}\

In this paper, we will focus on the compound nonhomogeneous Poisson risk model with a constant interest force, in which the claim sizes $X_k, k=1,2,\cdots,$ form a sequence of conditionally independent nonnegative random variables (r.v.s) with a common distribution $F$, while the arrival times $\sigma_k, k=1,2,\cdots$, constitute a nonhomogeneous Poisson process $$ N(t)=\sup \{k:~\sigma_k\leq t\},~ t\geq 0$$ with an intensity function $\lambda(t)$ and $\sigma_0=0$ by convention. Let $\{C(t): t\geq 0\}$ be a nondecreasing and right-continuous stochastic process that represents the total premium accumulated up to time $t$ and let $r>0$ be the constant interest force (i.e. one dollar becomes $e^{rt}$ dollars after time $t$).  Then the total surplus up to time $t$, denoted by $S_{r}(t)$, can be expressed as
\begin{eqnarray}\label{ruin0}
S_r(t)=xe^{rt}+\int_{0}^{t}e^{r(t-s)}C(ds)-\sum_{k=1}^{N(t)}X_ke^{r(t-\sigma_k)},\ t\ge0,
\end{eqnarray}
where $\sum_{k=1}^0X_k=0$ by convention, and $x\ge0$ is the initial surplus of an insurance company.

Correspondingly, the ruin probability with a finite time $T>0$ can be defined as
\begin{eqnarray}\label{ruin1}
\psi_r(x,T)=P(S_r(t)<0 \text{~for some}\ 0\le t\le T|S_r(0)=x).
\end{eqnarray}

When $\lambda(t)\equiv\lambda>0$ (i.e. $N(t)$ is a homogeneous Poisson process) and the claims $X_i,i=1,2,\cdots$ are independent, Tang (2005) proved that if the claims follow a common subexponential distribution $F$ (the definition is arranged in section 2.1), then for each $T>0$, it holds that
\begin{eqnarray}\label{00}
\psi_r(x,T)\sim\frac{\lambda}{r}\int_{x}^{xe^{rT}}\frac{\overline F(y)}{y}dy
\end{eqnarray}
as $x\to\infty$.

Many scholars have been attracted by this interesting result and they have tried to
generalize the result in several directions.  It is well known that, if $\{N(t): t\geq 0\}$ is a homogeneous Poisson process, then its inter arrivals $\theta_k=\sigma_k-\sigma_{k-1}$, $k=1,2,\cdots$
 form a sequence of independent and identically distributed r.v.s with a common exponential distribution. Hence, the first direction is to assume that $\{N(t):t\geq 0\}$ is a renewal process or a generalized renewal process, in which the inter arrival times $\theta_k$, $k=1,2,\cdots$  have a common distribution, not necessarily be exponential,  see Chen and Ng (2007), Wang (2008), Yang and Wang (2010) etc. Another direction is to assume that the claim sizes $X_k, k=1,2,\cdots,$ are dependent r.v.s with a common distribution,
 see Kong and Zong (2008), Yang and Wang (2012) and Gao et al(2012) etc.

In practical applications, the assumption that the inter arrival times $\theta_k $, $k=1,2,\cdots$ have a common distribution is unrealistic. For example, summer floods often lead to a lot of personal and property damage, which would cause that the inter arrival times are significantly different
in summer and winter. In general, for many insurance projects, the greater the number of insured persons, the greater the total number of claims,
which would lead to the assumption that the inter arrival times follows a common distribution unreasonable.
 Therefore, it is necessary to discuss the risk model with differently distributed inter arrival times.

On the other hand, more and more attention is paid to situations that the claim sizes $X_k, k=1,2,\cdots,$ are dependent r.v.s with a common distribution $F$. To the best of our knowledge, in this case, it is always assumed that $F$ belongs to the intersection of long-tailed class and dominant variation class (their definitions can be found in section 2.1) or its subclass, which excluded many popular subexponential distributions such as the lognormal distribution, the Weibull distributions with parameters in $(0,1)$, etc. and hence, these results can not cover Theorem 3.1 in Tang (2005). For this purpose, a question naturally arises: under what conditions, relation (\ref{00}) still holds for claim sizes are dependent r.v.s with a common subexponential distribution $F$?

In this paper, we will extend Theorem 3.1 in Tang (2005) in two directions: Firstly, we assume that $\{N(t):t\ge0\}$ is a nonhomogeneous Poisson process, which result in that the inter arrival times are neither independent nor identically distributed; Secondly, we assume that the claim sizes are conditionally independent.

We  specifically point out that the assumption of that the number of claims is a nonhomogeneous Poisson process is very reasonable. In fact, assuming that the claim of each insured of an insurance project is independent of each other,
 then the number of claims in any time interval will follows a binomial distribution, which follows approximately a Poisson distribution by the Poisson Theorem since the probability of an insured claim is usually small and the number of insured is usually large.

The rest of this paper consists of two sections. In Section 2, we will introduce some heavy-tailed distribution classes and some assumptions among random variables, and present main results of this paper. The proofs of Theorems are arranged in Section 3.

\section{Preliminaries and main results}
\setcounter{thm}{0}\setcounter{Corol}{0}\setcounter{lemma}{0}\setcounter{pron}{0}\setcounter{equation}{0}
\setcounter{remark}{0}\setcounter{exam}{0}\setcounter{property}{0}\setcounter{defin}{0}\

Throughout this paper, all limit relationships are for $x\to\infty$ unless stated otherwise, and  all random variables are defined on a probability space $(\Omega,\mathscr{F},\textbf{P})$. For two
positive functions $a(x)$ and $b(x)$, we write $a(x)\sim b(x)$ if $\lim a(x)/b(x)=1$; write $a(x)=o(b(x))$ if $\lim a(x)/b(x)=0$; write $a(x)=O(b(x))$ if $\limsup a(x)/b(x)<\infty$ and write $a(x)\lesssim b(x)$ if $\limsup a(x)/b(x)\leq 1$. For two real numbers $x$ and $y$, denote $x\vee y=\max(x,y)$ and $x\wedge y=\min(x,y)$. For any distribution $F$, we denote its (right) tail by $\overline F(x)=1-F(x)=F(x,\infty)$, $x\in(-\infty,\infty)$.

\subsection{Some distribution classes}\

To model the dangerous claim sizes in the insurance industry, most practitioners select the claim-size distribution from the heavy-tailed distribution
class. By definition, a distribution $F$ is said to be heavy-tailed if $\int_{0}^{\infty}e^{\varepsilon y}F(dy)=\infty$ holds for all $\varepsilon>0$.
 To this end, we now introduce some important subclasses of heavy-tailed
distribution class, one of which is the subexponential distribution class.

 A distribution $F$ supported on $[0,+\infty)$ is said to be subexponential, denoted by $F\in\mathcal{S}$, if $F$ is unbounded above ( i.e. $\overline{F}(x)>0$ holds for all $x>0$) and the relation
\begin{eqnarray}
\overline {F^{*n}}(x)\sim n\overline F(x)
\end{eqnarray}
holds for some (or equivalently for all) $n=2,3,\cdots$,  where $F^{*n}$ denotes the n-fold convolution of $F$ with itself. Furthermore, a distribution $F$ supported on $(-\infty,+\infty)$ is still called subexponential, if $F^+$ is subexponential, where $F^+(x)=F(x)\textbf{1}(x\ge 0)$ for $x \in(-\infty,\infty)$ and $\textbf{1}(A)$ is the indicator function of the set $A$.

 The class $\cal S$ contains a
lot of important distributions such as the lognormal distribution
and heavy-tailed Weibull distributions, as well as Pareto
distribution and Benktander Types I and II distributions etc.

Note that if a distribution $F$ supported on $[0,+\infty)$ or $(-\infty,+\infty)$ is subexponential, then it is long-tailed,
 denoted by $F\in\cal L$, in the sense that the relation
$$\overline{F}(x+y)\sim\overline{F}(x)$$
holds for any $y\in(-\infty, \infty)$.

The long-tailed distribution class has some important properties. For example,
it is well known that if $F\in \cal L$, then the function class
\begin{eqnarray*} {\mathcal H}(F)=:\left\{h ~{\rm{on}}~ [0,\infty):
h(x) \uparrow \infty,~{\frac{h(x)}{x} \downarrow0 }\mbox{~and~}
 \overline{F}(x-h(x))\sim \overline{F}(x)\right\}
\end{eqnarray*} is not empty, see for instance, Cline and Samorodnitsky (1994).
 Moreover, it is clear that
$h=\min\{h_1,h_2\}\in {\mathcal H}(F)$ if $h_i\in {\mathcal H}(F)$, $i=1,~2$. Additionally, if $h\in {\mathcal H}(F)$, then $ch\in {\mathcal H}(F)$ for any constant $c>0$.

For more detailed properties of the classes
$\cal S$ and $\cal L$ and their applications on finance and insurance, readers are referred to Embrechts et al. (1997) and Foss et al. (2011) etc.

Another important heavy-tailed distribution is the
dominatedly varying tailed distribution: an unbounded distribution $F$  supported on
$(-\infty,\infty)$ or $[0,\infty)$ is said to be
dominatedly varying tailed, denoted by
$F\in\cal{D}$, if the relation
$$
\overline{F}(tx)=O(\overline{F}(x))
$$
holds for some (or, equivalently, for any) $0<t<1$.

It is well known that $\mathcal L\cap\mathcal D\subset\mathcal
S$.

\subsection{Some assumptions}\

Before describing the main results of this paper, we will give some assumptions among random variables, which were introduced by Foss and Richards (2010) with a slight modification.

  Let $X_i$, $i=1,2,\cdots$ be random variables  with distributions $F_i$, $i=1,2,\cdots$. Let $F$ be a reference distribution with a subexponential tail supported on $[0,\infty)$
  and let $\mathscr{G}\subset\mathscr{F}$ be a $\sigma-$algebra.

\begin{assd}\label{asd1} r.v.s $X_1,X_2,\cdots$ are conditionally independent given $\mathscr{G}$. That is, for any collection of indices \{$i_1,\cdots,i_r$\}, and any collection of Borel sets $ A_{i_1},\cdots, A_{i_r}$, it holds that
 $$\mathbb{P}(X_{i_1}\in A_{i_1},\cdots,X_{i_r}\in A_{i_r}|\mathscr{G})=\mathbb{P}(X_{i_1}\in A_{i_1}|\mathscr{G})\cdots\mathbb{P}(X_{i_r}\in A_{i_r}|\mathscr{G}).$$\end{assd}

\begin{assd}\label{asd2} \textup{(i)} For each $i\geq 1$, $\overline F_i(x)\sim \alpha_i\overline F(x)$ for some constant $\alpha_i>0$;

 \textup{(ii)} There exists a constant $C>0$ such that $\overline F_i(x)\leq C\overline F(x)$ for all $x>0$ and $i\geq 1$.
\end{assd}

\begin{assd}\label{asd3}  For each $i\geq 1$, there exists a nondecreasing function $r(x)$ and an increasing collection of sets $B_i(x)\in\mathscr{G}$, with
$B_i(x)\to\Omega$ as $x\to\infty$,  such that the following inequality holds almost surely:
\begin{equation}\label{meq0}\mathbb{P}(X_i>x|\mathscr{G})\textbf{1}(B_i(x))\leq r(x)\overline F(x)\textbf{1}(B_i(x)),\end{equation}
and there is a function $h\in {\mathcal H}(F)$ such that

\textup{(i)} $\mathbb{P}(\overline B_i(h(x)))=o(\overline F(x))$ uniformly in $i$,

\textup{(ii)} $r(x)\overline F(h(x))=o(1)$,

\textup{(iii)} $r(x)\int_{h(x)}^{x-h(x)}\overline F(x-y)F(dy)=o(\overline F(x))$.

\end{assd}
\begin{rem}Clearly, if r.v.s $X_i,~i=1,2,\cdots$ are independent and  Assumption \ref{asd2} holds,  then Assumptions \ref{asd1} and \ref{asd3} are satisfied for $r(x)\equiv C $, $B_i(x)\equiv\Omega$ and for all $\sigma-$algebra $\mathscr{G}\subset\mathscr{F}$ and all $h\in {\mathcal H}(F)$.  \end{rem}

For further discussions on Assumptions \ref{asd1}-\ref{asd3}, readers are referred to Remarks 2.1-2.5 and Examples 1-5 in Foss and Richards(2010).

\subsection{Main results}\

In this subsection, we will present the main results of this paper. The proofs of theorems are arranged in the next section.

 First, inspired by Theorem 3.1 in Tang and Yuan (2014), we will investigate the tail behavior of the randomly weighted sums in which the primary r.v.s $X_1, \cdots , X_n,\cdots$ are conditionally independent and follow subexponential distributions.
\begin{thm}\label{th0}
Let $X_i$, $i=1,2,\cdots$ be conditionally independent nonnegative r.v.s with distributions $F_i,~i=1,2,\cdots$, which satisfy Assumptions \ref{asd1}, \ref{asd2}(i) and \ref{asd3} for a $\sigma$-algebra $\mathscr{G}\subset\mathscr{F}$ and a reference distribution $F\in \cal S$.
Suppose that r.v.s $\theta_i$, $i=1,2, \cdots$ are 
 positive and bounded above, namely there is a positive constant $b$ such that \begin{eqnarray}\label{tbound}
    P(0<\theta_i\le b)=1,\   i=1,2,\cdots,
\end{eqnarray}
which is independent of r.v.s $X_i,~i=1,2,\cdots$.
    Then for any $n=1,2,\cdots$, it holds that
\begin{eqnarray}\label{rws}
\mathbb{P}\lo(\sum_{i=1}^{n}\theta_i X_i>x\ro)\sim\mathbb{P}\lo(\max_{1\leq i\leq n}\theta_i X_i>x\ro)\sim\sum_{i=1}^{n}\mathbb{P}(\theta_i X_i>x).
\end{eqnarray}
\end{thm}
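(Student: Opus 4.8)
The plan is to prove the two flanking relations in \eqref{rws} and then recover the middle one by a sandwich argument. Concretely, I aim to establish (a) $\mathbb{P}\big(\max_{1\le i\le n}\theta_iX_i>x\big)\sim\sum_{i=1}^n\mathbb{P}(\theta_iX_i>x)$ and (b) $\mathbb{P}\big(\sum_{i=1}^n\theta_iX_i>x\big)\lesssim\sum_{i=1}^n\mathbb{P}(\theta_iX_i>x)$; since the sum dominates the maximum (all terms are nonnegative), these two facts force $\mathbb{P}(\max_i\theta_iX_i>x)\le\mathbb{P}(\sum_i\theta_iX_i>x)\lesssim\sum_i\mathbb{P}(\theta_iX_i>x)\sim\mathbb{P}(\max_i\theta_iX_i>x)$, whence every $\sim$ in \eqref{rws} follows. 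The recurring device is to enlarge the conditioning $\sigma$-algebra to $\mathscr{G}^{\ast}=\sigma(\mathscr{G}\cup\sigma(\theta_1,\theta_2,\dots))$: because the weights are independent of the $X_i$, Assumption \ref{asd1} still yields conditional independence of $X_1,\dots,X_n$ given $\mathscr{G}^{\ast}$, while the $\theta_i$ become $\mathscr{G}^{\ast}$-measurable, so conditionally I am handling genuinely independent summands with deterministic weights.

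For step (a), the union bound gives $\mathbb{P}(\max_i\theta_iX_i>x)\le\sum_i\mathbb{P}(\theta_iX_i>x)$, and Bonferroni gives $\mathbb{P}(\max_i\theta_iX_i>x)\ge\sum_i\mathbb{P}(\theta_iX_i>x)-\sum_{i<j}\mathbb{P}(\theta_iX_i>x,\theta_jX_j>x)$, so everything reduces to the single-big-jump estimate $\mathbb{P}(\theta_iX_i>x,\theta_jX_j>x)=o\big(\sum_k\mathbb{P}(\theta_kX_k>x)\big)$ for $i\ne j$. Conditioning on $\mathscr{G}^{\ast}$ factorizes this into $\mathbb{E}\big[\mathbb{P}(X_i>x/\theta_i\mid\mathscr{G})\,\mathbb{P}(X_j>x/\theta_j\mid\mathscr{G})\big]$. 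Here the boundedness $\theta_i\le b$ is decisive: since $\overline{F}$ is nonincreasing and the sets $B_i(\cdot)$ are increasing, the random arguments $x/\theta_i\ge x/b$ may be replaced by the fixed argument $x/b$ when passing to upper bounds, after which I feed in the conditional tail bound \eqref{meq0}. Because Assumption \ref{asd3}(ii) forces $r(u)\overline{F}(u)\to0$ (as $h(u)\le u$ gives $r(u)\overline F(u)\le r(u)\overline F(h(u))\to0$), one of the two factors is uniformly $o(1)$ on the good set $B_j$, while the contribution of $\overline{B_j}$ is absorbed by Assumption \ref{asd3}(i); together these produce the required $o(\cdot)$, Assumption \ref{asd2}(i) serving to match the tails $\overline F_i$ to $\overline F$.

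Step (b), the upper bound, is the substantive one, and I would prove it by induction on $n$. The case $n=1$ is an identity. For the inductive step write $S_n=S_{n-1}+\theta_nX_n$; conditionally on $\mathscr{G}^{\ast}$ the two pieces are independent, so $\mathbb{P}(S_n>x\mid\mathscr{G}^{\ast})$ is a genuine conditional convolution, to which the classical subexponential three-region splitting applies for a suitable $h\in\mathcal{H}(F)$: the regions $\{\theta_nX_n\le h\}$, $\{S_{n-1}\le h\}$, and the middle region $\{h<\theta_nX_n,\ h<S_{n-1}\}$. Through the long-tail relation $\overline{F}(x-h)\sim\overline{F}(x)$ and the induction hypothesis $\mathbb{P}(S_{n-1}>x)\lesssim\sum_{i<n}\mathbb{P}(\theta_iX_i>x)$, the two outer regions reproduce exactly the desired terms $\mathbb{P}(\theta_nX_n>x)$ and $\sum_{i<n}\mathbb{P}(\theta_iX_i>x)$.

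The genuine obstacle is the middle region, where two summands are simultaneously of moderate size; this is precisely the term that Assumption \ref{asd3}(iii) is built to annihilate. Estimating it amounts to bounding $\int_{h}^{x-h}\mathbb{P}(S_{n-1}>x-y\mid\mathscr{G}^{\ast})\,\mathbb{P}(\theta_nX_n\in dy\mid\mathscr{G}^{\ast})$ by $r(\cdot)\int_{h}^{x-h}\overline{F}(x-y)F(dy)$ after the weight-rescaling reduction to the fixed argument $x/b$, and then invoking Assumption \ref{asd3}(iii) to see it is of smaller order, with Assumption \ref{asd3}(i) once more mopping up the complements $\overline{B_i}$. The delicate bookkeeping is that, since the weights admit no positive lower bound, each error term must be measured against the \emph{weighted} individual tails $\mathbb{P}(\theta_iX_i>x)$ rather than against $\overline{F}(x)$; carrying out all the conditional convolution estimates uniformly over $\theta_i\in(0,b]$, while keeping the comparisons internal in this sense, is where the proof must be executed with the most care.
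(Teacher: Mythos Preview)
Your central device---enlarging the conditioning $\sigma$-algebra to $\mathscr{G}^\ast=\sigma(\mathscr{G}\cup\sigma(\theta_1,\theta_2,\dots))$---contains a genuine gap. You claim that independence of the weights from the $X_i$ forces $X_1,\dots,X_n$ to remain conditionally independent given $\mathscr{G}^\ast$, but the theorem assumes only $(\theta_i)\perp(X_i)$, not $(\theta_i)\perp\big((X_i),\mathscr{G}\big)$ or $(\theta_i)\perp(X_i)\mid\mathscr{G}$, and without one of these the enlargement can destroy conditional independence. A toy illustration: take $Z,X_1,X_2$ i.i.d.\ fair coins, $\mathscr{G}=\sigma(Z)$, and $\theta_1=1+(Z\oplus X_1\oplus X_2)$; then $\theta_1\in\{1,2\}$ is independent of $(X_1,X_2)$ and the $X_i$ are (conditionally) independent given $\mathscr{G}$, yet $\sigma(\mathscr{G},\theta_1)=\sigma(Z,X_1\oplus X_2)$, under which $X_1,X_2$ are no longer independent. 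The same obstruction blocks the transfer of Assumption~\ref{asd3}: the bound \eqref{meq0} is stated for $\mathbb{P}(X_i>x\mid\mathscr{G})$, and nothing guarantees that $\mathbb{P}(X_i>x\mid\mathscr{G}^\ast)$ obeys the same inequality with the same $r(\cdot)$ and $B_i(\cdot)$. So both the conditional factorisation and the conditional tail control on which your steps (a) and (b) rest may fail after the enlargement.

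The paper avoids this by separating the two layers of randomness rather than merging them. It first establishes, for \emph{deterministic} weights, the relation $\mathbb{P}\big(\sum_{i=1}^n c_iX_i>x\big)\sim\sum_{i=1}^n\mathbb{P}(c_iX_i>x)$ uniformly over $(c_1,\dots,c_n)\in[a,b]^n$ (Lemma~\ref{lem1}) and the companion max relation uniformly over $[0,b]^n$ (Lemma~\ref{lem2}); these proofs run entirely under the original $\mathscr{G}$ and Assumptions~\ref{asd1}--\ref{asd3} as given. The passage to random weights then uses nothing more than $(\theta_i)\perp(X_i)$: one writes $\mathbb{P}\big(\sum_i\theta_iX_i>x\big)$ as the integral of $\mathbb{P}\big(\sum_i c_iX_i>x\big)$ against the law of $(\theta_1,\dots,\theta_n)$ and feeds in the uniform deterministic asymptotic (this is the transfer argument imported from Cheng and Cheng (2017)). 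No enlargement of $\mathscr{G}$ is ever needed, and the difficulty you rightly flag---that error terms must be measured against the weighted tails $\mathbb{P}(\theta_iX_i>x)$ because the $\theta_i$ admit no positive lower bound---is handled at this transfer step rather than inside the conditional-independence machinery.
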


Next, we will give a result of the finite-time ruin probability,
 which extends Theorem 3.1 in Tang (2005) from the independent claim sizes to the conditionally independent one, and from the homogeneous Poisson claim number to nonhomogeneous one.
\begin{thm}\label{th1}Consider the compound nonhomogeneous Poisson model with a constant interest force introduced in section 1, in which the claim sizes $X_k, k=1,2,\cdots,$ are conditionally independent r.v.s with a common distribution $F\in\mathcal{S}$ which satisfy Assumptions \ref{asd1} and \ref{asd3} for a $\sigma$-algebra $\mathscr{G}\subset\mathscr{F}$, while the arrival times $\sigma_k, k=1,2,\cdots$, constitute a nonhomogeneous Poisson process $\{N(t): t\geq 0\}$ with an intensity function $\lambda(t)$.
 Let $\{C(t): t\geq 0\}$ be a nondecreasing and right-continuous stochastic process, denoting the total premium accumulated up to time $t$ and let $r>0$ be the constant interest force.
Assume that $\{X_k,k\geq 1\}, \{\sigma_k,k\geq 1\}$ and $\{C(t),t\geq 0\}$ are mutually independent.

 Then for any fixed $T>0$, it holds that
\begin{eqnarray}\label{50}
\psi_r(x,T)\sim\int_{0}^{T}\overline F(xe^{ru})\lambda(u)du.
\end{eqnarray}
\end{thm}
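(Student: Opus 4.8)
The plan is to reduce the finite-time ruin probability to the tail of the discounted aggregate claims and then to evaluate that tail by conditioning on the number of arrivals and applying Theorem \ref{th0}. Put $D(t)=\sum_{k=1}^{N(t)}X_ke^{-r\sigma_k}$ for the discounted claims and $\Pi(t)=\int_0^te^{-rs}C(ds)\ge0$ for the discounted premium. Multiplying (\ref{ruin0}) by $e^{-rt}$ shows that $S_r(t)<0$ is equivalent to $D(t)-\Pi(t)>x$. Since $D$ is nondecreasing and $\Pi$ is nonnegative and nondecreasing, ruin on $[0,T]$ forces $D(T)\ge\sup_{0\le t\le T}\big(D(t)-\Pi(t)\big)>x$, whereas the single event $\{S_r(T)<0\}$ already causes ruin; hence
\begin{eqnarray*}
P\big(D(T)>x+\Pi(T)\big)\le\psi_r(x,T)\le P\big(D(T)>x\big).
\end{eqnarray*}
Writing $A(x)=\int_0^T\overline F(xe^{ru})\lambda(u)\,du$, it then suffices to prove $P(D(T)>x)\sim A(x)$ and to show that the downward shift by $\Pi(T)$ is asymptotically negligible.

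For the tail of $D(T)$ I would condition on $N(T)=n$. Because $\{X_k\}$, $\{\sigma_k\}$ and $\{C(t)\}$ are mutually independent, under the conditional law on $\{N(T)=n\}$ the weights $\theta_k=e^{-r\sigma_k}$ obey $P(e^{-rT}\le\theta_k\le1)=1$ and are independent of the claims, while the $X_k$ retain the conditional-independence structure of Assumptions \ref{asd1} and \ref{asd3} with the common subexponential reference $F$ (so Assumption \ref{asd2} holds trivially with $\alpha_k\equiv1$). Theorem \ref{th0} therefore applies conditionally and gives, for each fixed $n$,
\begin{eqnarray*}
P\big(D(T)>x\,\big|\,N(T)=n\big)\sim\sum_{k=1}^{n}P\big(X_ke^{-r\sigma_k}>x\,\big|\,N(T)=n\big)=\sum_{k=1}^{n}E\big[\overline F(xe^{r\sigma_k})\,\big|\,N(T)=n\big].
\end{eqnarray*}
Summing over $n$ against $P(N(T)=n)$ and recognising the right-hand side as $E\big[\sum_{k=1}^{N(T)}\overline F(xe^{r\sigma_k})\big]$, the mean-measure (Campbell) identity for the nonhomogeneous Poisson process evaluates this expectation to exactly $A(x)$, so the conditional asymptotics point to the claimed limit.

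The main obstacle is that Theorem \ref{th0} yields the equivalence only for each fixed $n$, so I must justify interchanging $\lim_{x\to\infty}$ with the infinite sum over $n$. Using the order-statistics representation of the arrival times one computes $E\big[\sum_{k=1}^{n}\overline F(xe^{r\sigma_k})\mid N(T)=n\big]=\frac{n}{\Lambda(T)}A(x)$, where $\Lambda(T)=\int_0^T\lambda(u)\,du$, so that $P(D(T)>x)/A(x)$ equals a convex combination $\sum_{n}w_n\rho_n(x)$ with weights $w_n=\frac{n}{\Lambda(T)}P(N(T)=n)$ summing to one and ratios $\rho_n(x)\to1$. To pass to the limit I would invoke dominated convergence, for which a uniform, Kesten-type bound is required: since $D(T)\le\sum_{k=1}^{N(T)}X_k$, it suffices to control $P(\sum_{k=1}^{n}X_k>x)\le K(1+\varepsilon)^n\overline F(x)$ uniformly for conditionally independent claims satisfying Assumptions \ref{asd1} and \ref{asd3}, the super-exponential decay of $P(N(T)=n)$ then absorbing the factor $(1+\varepsilon)^n$. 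Establishing this uniform domination, rather than the fixed-$n$ asymptotics, is where the real work lies.

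It remains to dispose of the premium. The upper bound $\psi_r(x,T)\le P(D(T)>x)\sim A(x)$ is immediate. For the lower bound, the independence of $\Pi(T)$ from $D(T)$ gives, for any fixed $c>0$,
\begin{eqnarray*}
\psi_r(x,T)\ge P\big(D(T)>x+\Pi(T)\big)\ge P\big(D(T)>x+c\big)\,P\big(\Pi(T)\le c\big);
\end{eqnarray*}
since $A$ inherits the long-tailed property of $F$, so that $P(D(T)>x+c)\sim A(x+c)\sim A(x)$, while $P(\Pi(T)\le c)\to1$ as $c\to\infty$ (the total discounted premium being finite a.s.), letting first $x\to\infty$ and then $c\to\infty$ yields $\liminf\psi_r(x,T)/A(x)\ge1$. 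Combining the two bounds gives (\ref{50}).
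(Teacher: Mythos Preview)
Your overall architecture---the sandwich $P(D(T)>x+\Pi(T))\le\psi_r(x,T)\le P(D(T)>x)$, the conditional application of Theorem~\ref{th0} on $\{N(T)=n\}$, a Kesten-type bound to justify interchanging limit and sum, and long-tailedness to absorb the premium---is exactly the paper's.  Two remarks are in order.

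First, the Kesten bound you need is not new work: it is Lemma~2.1 of Foss and Richards (2010), quoted in the paper as Lemma~\ref{lem4}.

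Second, and more importantly, your proposed domination has a genuine gap.  From $D(T)\le\sum_{k\le n}X_k$ and Lemma~\ref{lem4} you get $P(D(T)>x\mid N(T)=n)\le K(1+\varepsilon)^n\overline F(x)$, hence
\[
\rho_n(x)=\frac{P(D(T)>x\mid N(T)=n)}{\tfrac{n}{\Lambda(T)}A(x)}\le \frac{K\Lambda(T)(1+\varepsilon)^n}{n}\cdot\frac{\overline F(x)}{A(x)}.
\]
For dominated convergence you need an $x$-free bound on $\rho_n(x)$, but for $F\in\mathcal S\setminus\mathcal D$ (lognormal, heavy-tailed Weibull) one has $\overline F(xe^{ru})/\overline F(x)\to 0$ for every $u>0$, so $A(x)/\overline F(x)\to 0$ by bounded convergence, and your majorant blows up in $x$.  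The paper avoids this by not discarding the weights: using the order-statistics representation (Lemma~\ref{lem5}) and the crude bound $e^{-rY_{(k:n)}}\le e^{-rY_{(1:n)}}$ one conditions on the \emph{minimum} arrival time to obtain
\[
P(D(T)>x\mid N(T)=n)\le\int_0^T P\Big(\sum_{k=1}^n X_k>xe^{ru}\Big)\,n(1-G_T(u))^{n-1}g_T(u)\,du\le \frac{V(\varepsilon)\,n(1+\varepsilon)^n}{\Lambda(T)}\,A(x),
\]
where the Kesten bound is applied at the level $xe^{ru}$ \emph{inside} the integral.  This yields $\rho_n(x)\le V(\varepsilon)(1+\varepsilon)^n$ uniformly in $x$, and dominated convergence goes through.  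Your Campbell-formula identification of the limiting sum as $A(x)$ is a clean alternative to the paper's explicit order-statistics computation, but for the domination you cannot bypass the conditioning on $Y_{(1:n)}$ (or some equivalent device keeping the bound proportional to $A(x)$ rather than $\overline F(x)$).

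Your truncation argument for the lower bound is a valid variant of the paper's direct dominated-convergence step.
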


\section{Proofs of Theorems}
\setcounter{thm}{0}\setcounter{Corol}{0}\setcounter{lemma}{0}\setcounter{pron}{0}\setcounter{equation}{0}
\setcounter{remark}{0}\setcounter{exam}{0}\setcounter{property}{0}\setcounter{defin}{0}\

 Before proving Theorems \ref{th0} and \ref{th1}, we prepare some lemmas on randomly weighted sums.

Inspired by Proposition 5.1 of Tang and Tsitsiashvili (2003), the first lemma studies the uniformly asymptotic behavior of weighted sums
of conditionally independent subexponential increments.
\begin{lemma}\label{lem1}
Let $X_i$, $i=1,2,\cdots$ be conditionally independent nonnegative r.v.s with distributions $F_i,~i=1,2,\cdots$, which satisfy Assumptions \ref{asd1}, \ref{asd2}(i) and \ref{asd3} for a $\sigma$-algebra $\mathscr{G}\subset\mathscr{F}$ and a reference distribution $F\in \cal S$. Then for any fixed $0<a<b<\infty$ and any positive integer $n$, the relation
\begin{eqnarray}\label{1}
\mathbb{P}\lo(\sum_{i=1}^{n}c_i X_i>x\ro)\sim\sum_{k=1}^{n}\mathbb{P}\lo(c_i X_i>x\ro)
\end{eqnarray}
holds uniformly for all $(c_1,\cdots,c_n)\in[a,b]^n$, that is
\begin{eqnarray}
\limsup\sup\limits_{(c_1,\cdots,c_n)\in
{[a,b]}^n}\left|\dfrac{P\left(\sum_{i=1}^n c_iX_i>x\right)}{
\sum_{i=1}^{n}P\left(c_iX_i>x\right)}- 1\right |=0.\label{uni01}
\end{eqnarray}
\end{lemma}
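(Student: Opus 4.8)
The plan is to prove the uniform asymptotic \eqref{uni01} by establishing matching uniform lower and upper bounds, i.e. that both
\[
\liminf\ \inf_{(c_1,\dots,c_n)\in[a,b]^n}\frac{P\!\left(\sum_{i=1}^n c_iX_i>x\right)}{\sum_{i=1}^n P(c_iX_i>x)}\ge 1
\qquad\text{and}\qquad
\limsup\ \sup_{(c_1,\dots,c_n)\in[a,b]^n}\frac{P\!\left(\sum_{i=1}^n c_iX_i>x\right)}{\sum_{i=1}^n P(c_iX_i>x)}\le 1
\]
hold. The device that turns pointwise estimates into uniform ones is the observation that, writing $x_i=x/c_i$, the constraint $c_i\in[a,b]$ forces $x_i\in[x/b,\,x/a]$, so every threshold tends to infinity with $x$; since the negligibility statements in Assumption \ref{asd3} --- notably (i) and the consequence $r(y)\overline F(y)=o(1)$ of (ii) (which follows from $\overline F(h(y))\ge\overline F(y)$) --- are uniform in $i$, they translate into estimates uniform over $(c_1,\dots,c_n)\in[a,b]^n$. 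Throughout I will repeatedly use that by Assumption \ref{asd2}(i) and $F\in\mathcal L$ one has $P(c_iX_i>x)=\overline{F_i}(x_i)\sim\alpha_i\overline F(x_i)$.

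For the lower bound, nonnegativity of the $X_i$ gives $\sum_{i=1}^n c_iX_i\ge\max_{1\le i\le n}c_iX_i$, so by Bonferroni
\[
P\!\left(\sum_{i=1}^n c_iX_i>x\right)\ge\sum_{i=1}^n P(c_iX_i>x)-\sum_{1\le i<j\le n}P(c_iX_i>x,\ c_jX_j>x).
\]
It therefore suffices to show that each cross term is $o\!\left(\sum_k P(c_kX_k>x)\right)$ uniformly. Conditioning on $\mathscr G$ and using conditional independence (Assumption \ref{asd1}),
\[
P(c_iX_i>x,\ c_jX_j>x)=\mathbb E\!\left[P(X_i>x_i\mid\mathscr G)\,P(X_j>x_j\mid\mathscr G)\right].
\]
On the set $B_i(h(x_i))\subseteq B_i(x_i)$ the first factor is bounded by $r(x_i)\overline F(x_i)$ via \eqref{meq0}; bounding the second factor by $1$, integrating it out, and controlling the complementary set by $P(\overline{B_i(h(x_i))})$ shows the cross term is at most $r(x_i)\overline F(x_i)\,\overline{F_j}(x_j)+P(\overline{B_i(h(x_i))})$, which is $o(\overline{F_j}(x_j))+o(\overline F(x_i))=o\!\left(\sum_k P(c_kX_k>x)\right)$ by (ii) and (i). This proves the lower bound.

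The upper bound is the main obstacle and follows the principle of a single big jump. Fix the truncation level $h(x)$, note that $\{\sum c_iX_i>x\}$ forces at least one summand to exceed $h(x)$ (since otherwise $\sum c_iX_i\le nh(x)<x$ for large $x$), and decompose according to how many of the $c_iX_i$ exceed $h(x)$. The contribution of ``exactly one large'' summand produces the main term: on this event the large index $i$ satisfies $c_iX_i>x-(n-1)h(x)$, and using that $h\in\mathcal H(F)$ (together with $ch\in\mathcal H(F)$ for $c>0$) and $F\in\mathcal L$ gives $\overline{F_i}\big((x-(n-1)h(x))/c_i\big)\sim\overline{F_i}(x_i)$ uniformly in $c_i$, so this contributes $\sim\sum_i P(c_iX_i>x)$. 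The genuine work is to show the remaining events are negligible. The ``two or more large summands'' event is dominated by the cross terms already estimated in the lower-bound step. The crucial and most delicate piece is the ``middle range'', where two summands are simultaneously of moderate size, lying in $[h(x),x-h(x)]$, yet their combined contribution pushes the total past $x$; after conditioning on $\mathscr G$ this is bounded, via \eqref{meq0}, by a conditional convolution integral of the type $r(\cdot)\int_{h(\cdot)}^{\,\cdot-h(\cdot)}\overline F(\cdot-y)\,F(dy)$, which is precisely the quantity that Assumption \ref{asd3}(iii) forces to be $o(\overline F)$. Assembling the three regimes, again with all error terms made uniform through the uniform-in-$i$ hypotheses and the compactness of $[a,b]$, yields $P(\sum c_iX_i>x)\lesssim\sum_i P(c_iX_i>x)$ uniformly. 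The main difficulty is exactly this last step: reproducing the subexponential convolution estimate under mere conditional (rather than full) independence and simultaneously in the weight vector $(c_1,\dots,c_n)$, where Assumption \ref{asd3}(iii) substitutes for the classical middle-range bound and the uniform control of the bad sets $\overline{B_i(\cdot)}$ must be tracked throughout.
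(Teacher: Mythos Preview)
Your lower bound is correct and mirrors the paper's pairwise cross-term estimate. The upper bound, however, has a real gap for $n\ge3$. The three cases you list are not disjoint: ``two summands in $[h(x),x-h(x)]$'' is a subevent of ``two or more exceed $h(x)$'', not a separate regime. More importantly, your claim that the event ``at least two $c_iX_i>h(x)$'' is dominated by the cross terms $P(c_iX_i>x,\,c_jX_j>x)$ from the lower-bound step is false --- those are at threshold $x$, not $h(x)$, and $P\big(c_iX_i>h(x),\,c_jX_j>h(x)\big)$ can be of order $\overline F(h(x))^2$, which need not be $o(\overline F(x))$. What must actually be bounded is $P\big(\sum_i c_iX_i>x,\ \text{no single }c_iX_i\text{ exceeds }x-(n-1)h(x)\big)$, and for $n\ge3$ this is a genuinely multi-dimensional middle region that a single invocation of Assumption~\ref{asd3}(iii) does not control.

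The paper avoids this by induction on $n$: writing $S_{m+1}^c=S_m^c+c_{m+1}X_{m+1}$, the inductive hypothesis supplies the uniform asymptotic for $S_m^c$, so at each step one faces only a \emph{two}-term decomposition --- $c_{m+1}X_{m+1}$ large, $S_m^c$ large, or both in the band $\big(Mh(x/M),\,x-Mh(x/M)\big)$ with $M=\max_i c_i$. On that last piece, conditioning on $\mathscr G$, applying \eqref{meq0} to $X_{m+1}$ on $B_{m+1}$, and introducing an auxiliary independent $Y\sim F$ to decouple, reduces the estimate exactly to $r(x/M)\int_{h(x/M)}^{x/M-h(x/M)}\overline F(x/M-y)\,F(dy)$, which is precisely the quantity Assumption~\ref{asd3}(iii) makes $o(\overline F(x/M))$. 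The induction also delivers uniformity in $(c_1,\dots,c_n)$ for free, since every auxiliary threshold produced at the inductive step depends only on $a$, $b$, $m$ and $\varepsilon$, not on the particular weight vector. If you wish to keep a direct (non-inductive) argument you would need an $n$-fold Kesten-type bound under conditional independence, but that is additional machinery your sketch does not invoke.
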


\begin{proof}We will prove (\ref{uni01}) by mathematical induction. When $n=1$, the relation (\ref{uni01}) holds naturally. Then we assume that (\ref{uni01}) holds for $n=m$, where $m$ is a positive integer. We are aim to prove that
(\ref{uni01}) holds for $n=m+1$.

Fixed $\varepsilon>0$. From the induction hypothesis, there is a positive constant $x_0$ such that
\begin{eqnarray}
(1-\varepsilon)\sum_{i=1}^{m}P(c_iX_i>x)\leq P\left(\sum_{i=1}^m c_iX_i>x\right)\leq
(1+\varepsilon)\sum_{i=1}^{m}P(c_iX_i>x).\label{uni03}
\end{eqnarray}
holds for all $(c_1,\cdots,c_m)\in
{[a,b]}^m$ and $x>x_0$. Now we fix weighted numbers $(c_1,\cdots,c_m,c_{m+1})\in
{[a,b]}^{m+1}$.
For notational convenience we write $S_m^{c}=\sum_{k=1}^{m}c_k X_k$ and write $M=\max\{c_1,\cdots,c_{m+1}\}$.

First, we discuss the upper bound. By conventional methods, we use a decomposition as follows: 
\begin{eqnarray}
\mathbb{P}\lo(\sum_{i=1}^{m+1} c_iX_i>x\ro)
&\leq&\mathbb{P}\lo(c_{m+1} X_{m+1}>x-{M}h\left(\frac{x}{{M}}\right)\ro)+\mathbb{P}\lo(S_m^{c}> x-{M}h\left(\frac{x}{{M}}\right)\ro)\nonumber\\
& &+\mathbb{P}\lo({M}h\left(\frac{x}{{M}}\right)<S_m^{c}\leq x-{M}h\left(\frac{x}{{M}}\right),\sum_{i=1}^{m+1} c_iX_i>x\ro)\nonumber\\
&=:&L_1(x)+L_2(x)+L_3(x). \label{fj01}
\end{eqnarray}
Firstly, we estimate $L_1(x)$. By Assumption \ref{asd2}(i) and $h\in\mathcal{H}(F)$, there is a positive constant $x_1\geq x_0$ such that
\begin{eqnarray}\label{uni04}\overline F_{i}(x-bh(x)/a)\leq(1+\varepsilon)\overline F_{i}(x)
\end{eqnarray}
and
\begin{eqnarray}
 \frac{\alpha_i}{2}\overline{F}(x)\leq \overline{F}_i(x)\leq 2\alpha_i\overline{F}(x) \label{meq4}
 \end{eqnarray}
 hold for all $x>x_1$ and $i=0,1,2,\cdots, m+1$, where $F_0\equiv F$ and $\alpha_0\equiv 1$  by convention. Hence, it follows from (\ref{fj01}) and (\ref{uni04}) that
 \begin{eqnarray}\label{l1}
L_1(x)
&\leq&\mathbb{P}\lo( X_{m+1}>\frac{x}{c_{m+1}}-\frac{b}{a}h\lo(\frac{x}{c_{m+1}}\ro)\ro)\nonumber\\
&\leq&(1+\varepsilon)\mathbb{P}\lo(c_{m+1} X_{m+1}>x\ro)
\end{eqnarray}
holds for all $x>(b\vee 1)x_1.$

Next, we estimate $L_2(x)$.
Since $h(x) \uparrow\infty$ and $\frac{h(x)}{x}\downarrow 0$, there is a positive number $x_2$ such that
$$x-bh\left(\frac{x}{a}\right)>x_0 ~\text{ and }~  (a\wedge 1)h\left(\frac{x}{b}\right)>x_1$$
hold for all $x>x_2$, which yields that
\begin{equation}\label{uni07}x-Mh\left(\frac{x}{M}\right)>x_0\end{equation}
and\begin{equation}\label{uni08}(M\wedge 1)h\left(\frac{x}{M}\right)>x_1\geq x_0\end{equation}
hold for all $x>x_2$. Hence, it follows from (\ref{uni03}), (\ref{uni04}) and (\ref{uni07}) that
\begin{eqnarray}\label{l2}
L_2(x)&\leq&(1+\varepsilon)\sum_{i=1}^{m}\mathbb{P}\lo(c_i X_i> x-{M}h\left(\frac{x}{{M}}\right)\ro)\nonumber\\
&\leq&(1+\varepsilon)\sum_{i=1}^{m}\mathbb{P}\lo(X_i> \frac{x}{c_i}-\frac{b}{a}h\lo(\frac{x}{c_i}\ro)\ro)\nonumber\\
&\leq&(1+\varepsilon)^2\sum_{i=1}^{m}\mathbb{P}\lo(c_iX_i> x\ro)
\end{eqnarray}
holds for all $x>\max((b\vee 1)x_1, x_2)$.

 To complete the estimate on the upper bound, we now estimate $L_3$.
Let $Y$ be an r.v. with distribution $F\in\mathcal{S}$, independent of $X_i$, $1\le i \le m+1$ and $\mathscr{G}$. Using the approach followed in Proposition 2.1 of Foss and Richards (2010),
by the law of total probability, it follows from the conditional independence between $X_{m+1}$ and $S_m^c$ that
\begin{eqnarray}\label{5}
&&L_3(x)
=\mathbb{E}\lo(\mathbb{P}\lo(\left.{M}h\left(\frac{x}{{M}}\right)<S_m^{c}\leq x-{M}h\left(\frac{x}{{M}}\right),\sum_{i=1}^{m+1} c_iX_i>x\right|\mathscr{G}\ro)\ro)\nonumber\\
&\leq&\mathbb{E}\lo(\int_{{M}h\left(\frac{x}{{M}}\right)}^{x-{M}h\left(\frac{x}{{M}}\right)}\mathbb{P}\Big(\left.X_{m+1}>\frac{x-y}{{M}}\right|\mathscr{G}\Big)\mathbb{P}\lo(S_{m}^{c}\in dy|\mathscr{G}\ro)
\ro)\nonumber\\
&=&\mathbb{E}\Bigg(\int_{{M}h\left(\frac{x}{{M}}\right)}^{x-{M}h\left(\frac{x}{{M}}\right)}\mathbb{P}\Big(\left.X_{m+1}>\frac{x-y}{{M}}\right|\mathscr{G}\Big)\mathbb{P}\lo(S_{m}^{c}\in dy|\mathscr{G}\ro)
 \cdot\Big(\textbf{1}\Big(B_{m+1}\Big(\frac{x-y}{{M}}\Big)\Big)+\textbf{1}\Big(\overline B_{m+1}\Big(\frac{x-y}{{M}}\Big)\Big)\Big)\Bigg)\nonumber\\
&\leq&r\left(\frac{x}{{M}}\right)\mathbb{E}\lo(\int_{{M}h\left(\frac{x}{{M}}\right)}^{x-{M}h\left(\frac{x}{{M}}\right)}\mathbb{P}\Big(Y>\frac{x-y}{M}\Big|\mathscr{G}\Big)\mathbb{P}\lo(S_{m}^{c}\in dy|\mathscr{G}\ro)
\ro)+\mathbb{E}\Big(\textbf{1}\Big(\overline B_{m+1}\Big(h\left(\frac{x}{{M}}\right)\Big)\Big)\Big)\nonumber\\
&=&r\left(\frac{x}{{M}}\right)\mathbb{P}\Big({M}Y+S_m^c>x, {M}h\left(\frac{x}{{M}}\right)<S_m^c<x-{M}h\left(\frac{x}{{M}}\right)\Big)+\mathbb{P}\Big( \overline B_{m+1}\Big(h\left(\frac{x}{{M}}\right)\Big)\Big),\label{uni05}
\end{eqnarray}
where the last but one step comes from (\ref{meq0}). To estimate the first term in (\ref{uni05}), it follows from (\ref{uni03}), (\ref{uni04}) and (\ref{uni08})  that
\begin{eqnarray}
&&\mathbb{P}({M}Y+S_m^c>x, {M}h\left(\frac{x}{{M}}\right)<S_m^c<x-{M}h\left(\frac{x}{{M}}\right))\nonumber\\
&\leq&\mathbb{P}\Big ({M}Y>x-{M}h \Big (\frac{x}{{M}} \Big ), S_m^c>{M}h \Big (\frac{x}{{M}} \Big )\Big )+\mathbb{P}\Big ({M}Y+S_m^c>x, h \Big (\frac{x}{{M}} \Big )<Y<\frac{x}{M}-h \Big (\frac{x}{{M}} \Big )\Big )\nonumber\\
&\leq& \mathbb{P}\lo(S_{m}^{c}>{M}h\left(\frac{x}{{M}}\right)\ro)\overline F\left(\frac{x}{{M}}-h\left(\frac{x}{{M}}\right)\right)
 + \int_{{M}h\left(\frac{x}{{M}}\right)}^{x-{M}h\left(\frac{x}{{M}}\right)}\mathbb{P}\lo(S_{m}^{c}>x-y\ro)\mathbb{P}({M}Y\in dy) \nonumber\\
&\leq&(1+\varepsilon)^2 \sum_{i=1}^{m}\mathbb{P}\lo(c_i X_i>{M}h\left(\frac{x}{{M}}\right)\ro)\overline F\Big(\frac{x}{{M}}\Big)\nonumber\\
& &+ (1+\varepsilon)\sum_{i=1}^{m}\int_{{M}h\left(\frac{x}{{M}}\right)}^{x-{M}h\left(\frac{x}{{M}}\right)}\mathbb{P}\lo(c_i X_i>x-y\ro)\mathbb{P}({M}Y\in dy)\label{uni09}
\end{eqnarray}
holds for all $x>\max((b\vee 1)x_1,x_2)$. Recall that $Y$ is independent of $X_i$, $i=1,2,\cdots, m+1$.
Noting that $\frac{x}{{M}}-y>h\left(\frac{x}{{M}}\right)$ for all $h\left(\frac{x}{{M}}\right)\leq y\leq\frac{x}{{M}}-h\left(\frac{x}{{M}}\right)$,
it follows from (\ref{meq4}) and (\ref{uni08})  that
\begin{eqnarray}&&\sum_{i=1}^{m}\int_{{M}h\left(\frac{x}{{M}}\right)}^{x-{M}h\left(\frac{x}{{M}}\right)}\mathbb{P}\lo(c_i X_i>x-y\ro)\mathbb{P}({M}Y\in dy)\nonumber\\
&\leq& 
 \sum_{i=1}^{m}\int_{h\left(\frac{x}{{M}}\right)}^{\frac{x}{{M}}-h\left(\frac{x}{{M}}\right) }\mathbb{P}\lo(X_i>\frac{x}{{M}}-y\ro)F(dy) \nonumber\\
&\leq&m\rho  \int_{h\left(\frac{x}{{M}}\right)}^{\frac{x}{{M}}-h\left(\frac{x}{{M}}\right)}\overline F\lo(\frac{x}{{M}}-y\ro)F(dy) \label{uni11}
\end{eqnarray}
holds for all $x>x_2$, where $\rho=\rho_{m+1}=2\max(\alpha_1,\cdots,\alpha_{m+1}).$
Substituting (\ref{uni09}) and (\ref{uni11}) into (\ref{uni05}), we have that
\begin{eqnarray}L_3(x)&\leq&m\rho(1+\varepsilon)^2r\left(\frac{x}{{M}}\right)\overline F\left(h\left(\frac{x}{{M}}\right)\right)\overline F\left(\frac{x}{{M}}\right)\nonumber\\
&  &+m\rho(1+\varepsilon)r\left(\frac{x}{{M}}\right)\int_{h\left(\frac{x}{{M}}\right)}^{\frac{x}{{M}}-h\left(\frac{x}{{M}}\right)}\overline F\lo(\frac{x}{{M}}-y\ro)F(dy)+\mathbb{P}\left( \overline B_{m+1}\left(h\left(\frac{x}{{M}}\right)\right)\right)\label{uni12}
\end{eqnarray}
holds for all $x>\max((b\vee 1)x_1,x_2)$.
From Assumption \ref{asd3}, there exists a positive constant $x_3$ such that
\begin{eqnarray}
 &&\mathbb{P}( \overline B_{m+1}(h(x)))<\varepsilon\overline{F}(x);\label{meq1}\\
 &&r(x)\overline{F}(h(x))<\varepsilon;\label{meq2}
 \end{eqnarray}
and
\begin{equation*}
r(x )\int_{h(x )}^{x -h(x )}\overline F\lo(x-y\ro)F(dy)<\varepsilon \overline{F}(x)\label{meq3}
\end{equation*}
holds for all $x>x_3$.
  Hence, 
  combining with (\ref{uni12}), we have that
\begin{eqnarray*}
L_3(x)\leq( m\rho\varepsilon(1+\varepsilon)^2+m\rho\varepsilon(1+\varepsilon)+\varepsilon)\overline{F}(\frac{x}{{M}})
\end{eqnarray*}
holds for all $x>\max((b\vee 1)x_1,x_2,(b\vee 1)x_3)$.
Now we denote $\kappa=\min(\alpha_i,~1\leq i\leq m+1)$. Combining with (\ref{meq4}), we have that
\begin{eqnarray}\label{l3}
L_3(x)\leq \frac{2\varepsilon}{\kappa}( m\rho (1+\varepsilon)^2+m\rho (1+\varepsilon)+1)\sum_{i=1}^{m+1}P(c_iX_i>x).
\end{eqnarray}
holds for all $x>\max((b\vee 1)x_1,x_2,(b\vee 1)x_3)$. By the arbitrariness of $\varepsilon$, noting that numbers $x_i$, $i=0,1,2,3$ are independent of $c_i$, $i=1,2,\cdots,m+1$, it follows from (\ref{l1}),(\ref{l2}) and (\ref{l3}) that
\begin{eqnarray}
\limsup\sup\limits_{(c_1,\cdots,c_{m+1})\in
{[a,b]}^{m+1}}\dfrac{P(\sum_{i=1}^{m+1} c_iX_i>x)}{
\sum_{i=1}^{m+1}P(c_iX_i>x)}\leq 1.\label{uni02}
\end{eqnarray}

Now we will estimate the lower bound. Since all $c_i$ and $X_i$, $i=1,2,\cdots$ are nonnegative, we can simply use the following decomposition:
\begin{eqnarray}\label{uni21}
\mathbb{P}\lo(\sum_{i=1}^{m+1} c_iX_i>x\ro)\geq\mathbb{P}\lo(S_m^{c}> x\ro)+\mathbb{P}\lo(c_{m+1} X_{m+1}> x\ro)-\mathbb{P}\lo(S_{m}^{c}> x,c_{m+1} X_{m+1}> x\ro)\end{eqnarray}
From (\ref{uni03}), it is clear that
\begin{eqnarray}\label{uni22}
\mathbb{P}\lo(S_m^{c}> x\ro)+\mathbb{P}\lo(c_{m+1} X_{m+1}> x\ro)&\geq&(1-\varepsilon)\sum_{k=1}^{m+1}\mathbb{P}\lo(c_k X_k> x\ro)
\end{eqnarray}
holds for all $x>x_0$. Therefore, we only need to give an estimation of the third term at the right hand of (\ref{uni21}).
Using the law of total probability again, it follows from (\ref{meq0}) that
 \begin{eqnarray*}\label{7}
&&\mathbb{P}\lo(S_m^{c}> x,c_{m+1} X_{m+1}> x\ro)\nonumber\\
&\leq&\mathbb{E}\lo(\mathbb{P}\lo(S_m^{c}> x,X_{m+1}> \frac{x}{{M}}\Big|\mathscr{G}\ro)\left(\textbf{1}\left(B_{m+1}\left(\frac{x}{{M}}\right)\right)+\textbf{1}\left(\overline B_{m+1}\left(\frac{x}{{M}}\right)\right)\right)\ro)\nonumber\\
&\leq&\mathbb{E}\lo(\mathbb{P}\lo(S_m^{c}> x|\mathscr{G}\ro)\mathbb{P}\lo(X_{m+1}> \frac{x}{{M}}\Big|\mathscr{G}\ro)\textbf{1}\left(B_{m+1}\left(\frac{x}{{M}}\right)\right)\ro)+\mathbb{E}\left(\textbf{1}\left(\overline B_{m+1}\left(\frac{x}{{M}}\right)\right)\right)\nonumber\\
&\leq&r\left(\frac{x}{{M}}\right)\overline F\left(\frac{x}{{M}}\right)\mathbb{P}\lo(S_m^{c}> x\ro)+\mathbb{P}\left(\overline B_{m+1}\left(\frac{x}{{M}}\right)\right),
\end{eqnarray*}
which implies from (\ref{uni03}), (\ref{meq1}),(\ref{meq2}) and (\ref{meq4}) that
\begin{eqnarray}
&&\mathbb{P}\lo(S_m^{c}> x,c_{m+1} X_{m+1}> x\ro)\nonumber\\
&\leq&(1+\varepsilon)r\left(\frac{x}{{M}}\right)\overline F\left(\frac{x}{{M}}\right)\sum_{k=1}^{m}\mathbb{P}\lo(c_k X_k> x\ro)+\varepsilon\overline F\left(\frac{x}{{M}}\right)\nonumber\\
&\leq& \varepsilon\left (1+\varepsilon+\frac{2}{\kappa}\right)\sum_{k=1}^{m+1}\mathbb{P}\lo(c_k X_k> x\ro)
.\label{uni25}
\end{eqnarray}
holds for all $x>\max(x_0,(b\vee 1)x_1,(b\vee 1)x_3)$.
By the arbitrariness of $\varepsilon$, noting that numbers $x_i$, $i=0,1,2,3$ are independent of $c_i$, $i=1,2,\cdots,m+1$ again, it follows from (\ref{uni21}),(\ref{uni22}) and (\ref{uni25}) that
\begin{eqnarray}
\liminf\inf\limits_{(c_1,\cdots,c_{m+1})\in
{[a,b]}^{m+1}}\frac{\mathbb{P}(\sum_{i=1}^{m+1} c_iX_i>x)}{
\sum_{i=1}^{m+1}\mathbb{P}(c_iX_i>x)}\geq 1.\label{uni20}
\end{eqnarray}

Hence, by (\ref{uni02}) and (\ref{uni20}), we obtain that (\ref{uni01}) holds for $n=m+1$. 
\end{proof}

In the next lemma, we establish a 
 result on uniform asymptotic behavior of weighted maximum of subexponential increments.
\begin{lemma}\label{lem2}
Let $X_i$, $i=1,2,\cdots$ be conditionally independent real valued r.v.s with distributions $F_i,~i=1,2,\cdots$, which satisfy Assumptions \ref{asd1}, \ref{asd2}(i) and \ref{asd3} for a $\sigma$-algebra $\mathscr{G}\subset\mathscr{F}$ and a reference distribution $F\in \cal S$. Then for any fixed $0<b<\infty$ and any positive integer $n$, the relation
\begin{eqnarray}\label{max1}
\mathbb{P}\lo(\max_{1\leq i\leq n}c_i X_i>x\ro)\sim\sum_{i=1}^{n}\mathbb{P}\lo(c_i X_i>x\ro)
\end{eqnarray}
holds uniformly for all $(c_1,\cdots,c_n)\in[0,b]^n$. When all the weights $c_i$ are equal to $0$, the ratio of the left and right hands of (\ref{max1}) is simply understood as 1.
 \end{lemma}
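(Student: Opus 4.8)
The plan is to sandwich the probability $\mathbb{P}(\max_{1\le i\le n}c_iX_i>x)$ between the elementary union bound and a Bonferroni lower bound, and to reduce everything to a uniform estimate of the pairwise intersection probabilities. The upper bound $\mathbb{P}(\max_{1\le i\le n}c_iX_i>x)\le\sum_{i=1}^n\mathbb{P}(c_iX_i>x)$ is immediate and uniform in $(c_1,\dots,c_n)$, while the inclusion--exclusion inequality gives
$$\mathbb{P}\Big(\max_{1\le i\le n}c_iX_i>x\Big)\ge\sum_{i=1}^n\mathbb{P}(c_iX_i>x)-\sum_{1\le i<j\le n}\mathbb{P}(c_iX_i>x,\,c_jX_j>x).$$
Thus it suffices to show that $\sum_{i<j}\mathbb{P}(c_iX_i>x,c_jX_j>x)=o\big(\sum_k\mathbb{P}(c_kX_k>x)\big)$ uniformly over $[0,b]^n$. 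Any index with $c_i=0$ contributes nothing to either side for $x>0$, and the all-zero case is covered by the stated convention, so I may restrict to indices with $c_i>0$ and abbreviate $u_i=x/c_i\ge x/b$.

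The heart of the proof is the uniform control of one pairwise term. Conditioning on $\mathscr{G}$ and invoking the conditional independence of Assumption D1 yields $\mathbb{P}(c_iX_i>x,c_jX_j>x)=\mathbb{E}\big[\mathbb{P}(X_i>u_i\mid\mathscr{G})\,\mathbb{P}(X_j>u_j\mid\mathscr{G})\big]$. Splitting the factor for $X_i$ according to $B_i(u_i)$ and $\overline B_i(u_i)$, applying the defining inequality (\ref{meq0}) of Assumption D3 on $B_i(u_i)$, and bounding both conditional probabilities by $1$ on $\overline B_i(u_i)$, I obtain
$$\mathbb{P}(c_iX_i>x,\,c_jX_j>x)\le r(u_i)\overline F(u_i)\,\overline F_j(u_j)+\mathbb{P}(\overline B_i(u_i)).$$
For the first summand, $u_i\ge h(u_i)$ for large $x$ (as $h(t)/t\downarrow0$) gives $r(u_i)\overline F(u_i)\le r(u_i)\overline F(h(u_i))$, which tends to $0$ uniformly for $u_i\ge x/b$ by D3(ii); since $\overline F_j(u_j)=\mathbb{P}(c_jX_j>x)$, this term is $o(\mathbb{P}(c_jX_j>x))$. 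For the second summand, monotonicity of $B_i(\cdot)$ gives $\mathbb{P}(\overline B_i(u_i))\le\mathbb{P}(\overline B_i(h(u_i)))=o(\overline F(u_i))$ uniformly in $i$ by D3(i), and Assumption D2(i) turns $\overline F(u_i)$ into $\mathbb{P}(c_iX_i>x)$ up to the fixed factor $1/\alpha_i$, so this term is $o(\mathbb{P}(c_iX_i>x))$.

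Summing these bounds over the finitely many pairs and absorbing the fixed constants (each $\alpha_i>0$, and $n$ fixed) into a single quantity $\varepsilon(x)\to0$, I conclude that $\sum_{i<j}\mathbb{P}(c_iX_i>x,c_jX_j>x)\le\varepsilon(x)\sum_k\mathbb{P}(c_kX_k>x)$ uniformly over $[0,b]^n$. Combined with the union bound this gives $1-\varepsilon(x)\le\mathbb{P}(\max_i c_iX_i>x)/\sum_k\mathbb{P}(c_kX_k>x)\le1$, which is exactly (\ref{max1}) with the asserted uniformity.

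The step I expect to be the main obstacle is checking that the $o(1)$ and $o(\cdot)$ estimates furnished by Assumption D3 are genuinely uniform in both the index $i$ and the weight $c_i$ over the whole cube $[0,b]^n$. This succeeds because the bound $u_i=x/c_i\ge x/b$ lets the pointwise limits in D3 be replaced by suprema over $t\ge x/b$, which still vanish as $x\to\infty$, and because there are only finitely many indices, so the $i$-dependent thresholds coming from D2(i) can be merged into one. Note that, unlike Lemma \ref{lem1}, no convolution term appears here, so part (iii) of Assumption D3 is not needed, and the argument makes no use of the sign of the $X_i$, which is why real-valued increments are admissible.
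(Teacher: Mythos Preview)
Your proposal is correct and follows essentially the same route as the paper: the trivial union upper bound, the Bonferroni lower bound, and then a uniform control of each pairwise term via conditional independence, the $B_i$-splitting from Assumption~\ref{asd3}, and the tail comparison from Assumption~\ref{asd2}(i). The only cosmetic difference is that the paper first replaces both thresholds $x/c_i$ and $x/c_j$ by the common value $x/M$ with $M=\max_{1\le k\le n}c_k$, whereas you keep the individual $u_i=x/c_i$ and $u_j=x/c_j$; since both satisfy $u_i,u_j\ge x/b$, this changes nothing in the uniformity argument, and your explicit remark that only parts (i) and (ii) of Assumption~\ref{asd3} are used is accurate.
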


\begin{proof}It is trivial that for all $({c_1}, {c_2}, \ldots , {c_{n}})
\in [0,b]^n$, it holds that
$$\mathbb{P}\left(\max_{1\le i\le n} c_iX_i>x\right)\le
  \sum_{i=1}^n \mathbb{P}\left(
c_iX_i>x\right).$$ Hence, it remains to prove that
 \be\label{mx}
\liminf\inf\limits_{(c_1,\cdots,c_{n})\in
{[a,b]}^{n}}\frac{\mathbb{P}\left(\max_{1\le i\le n} c_iX_i>x\right)}{
  \sum_{i=1}^n \mathbb{P}\left(
c_iX_i>x\right)}\geq 1.\ee

Let $M=\max_{1\leq i\leq n}c_i>0$.
For any $1\leq i<j\leq n$, it follows from Assumption \ref{asd1} and (\ref{meq0}) that
\begin{eqnarray}&&\mathbb{P}\lo (c_i X_i>x, c_j X_j>x\ro )\nonumber\\
&\leq& \mathbb{E}\lo (\mathbb{P}\lo (X_i>\frac{x}{M},X_j>\frac{x}{M}|\mathscr{G}\ro )\Big (\textbf{1}\Big (B_{j}\Big (\frac{x}{M}\Big )\Big )+\textbf{1}\Big (\overline B_{j}\Big (\frac{x}{M}\Big )\Big )\Big )\ro )\nonumber\\
&\leq&\mathbb{E}\lo (\mathbb{P}\lo (X_i>\frac{x}{M}|\mathscr{G}\ro )\mathbb{P}\lo (X_j>\frac{x}{M}|\mathscr{G}\ro )\textbf{1}\Big (B_{j}\Big (\frac{x}{M}\Big )\Big )\ro )+\mathbb{E}\Big (\textbf{1}\Big (\overline B_{j}\Big (\frac{x}{M}\Big )\Big )\Big )\nonumber\\
&\leq&r\Big (\frac{x}{M}\Big )\overline F\Big (\frac{x}{M}\Big )\mathbb{P}\lo (X_i>\frac{x}{M}\ro )+\mathbb{P}\Big (\overline B_{j}\Big (\frac{x}{M}\Big )\Big ).\label{max2}
\end{eqnarray}
Hence, for any fixed $\varepsilon\in (0,\frac{1}{2n^2})$, from Assumptions \ref{asd2}(i) and \ref{asd3}(i)(ii), there exists a constant $x_5>0$, which is independent of the weights $c_i$, $i=1,2,\cdots,n$, such that
$$r (x )\overline F(x )<\varepsilon~ \textup{ and }~\mathbb{P}(\overline B_{j}(x ))<\varepsilon\overline F_j(x )$$
holds for all $x>x_5$ and $j=1,2,\cdots,n.$
Noting that $$\mathbb{P}\lo(X_i>\frac{x}{M}\ro)\leq \sum_{k=1}^n \mathbb{P}\lo(c_kX_k>x\ro),$$it follows from (\ref{max2}) that
\begin{eqnarray}\label{62}
\mathbb{P}\lo(\max_{1\leq k \leq n}c_i X_i>x\ro)
&\geq&\sum_{i=1}^{n}\mathbb{P}\lo(c_i X_i>x\ro)-\sum_{1\leq i<j \leq n}\mathbb{P}\lo(c_i X_i>x, c_j X_j>x\ro)\nonumber\\
&\geq&
  (1-2n(n-1)\varepsilon)\sum_{i=1}^n \mathbb{P}\left(c_iX_i>x\right)
\end{eqnarray} holds for all $({c_1}, {c_2}, \ldots ,
{c_{n}}) \in [0,b]^n$ and $x>bx_5$, which yields that (\ref{mx}) holds by the arbitrariness of $\varepsilon$ and ends the proof of Lemma \ref{lem2}.
\end{proof}

The following lemma is a particular case of Lemma 2.1 in Foss and Richards (2010).
\begin{lemma}\label{lem4}
Let  $X_i$, $i=1,2,\cdots$ be conditionally independent r.v.s with a common distribution $F\in\mathcal{S}$ which satisfy Assumptions \ref{asd1} and \ref{asd3} for a $\sigma$-algebra $\mathscr{G}\subset\mathscr{F}$. 
 Then for any $\varepsilon>0$, there exist constants $V(\varepsilon)>0$ and $x_0=x_0(\varepsilon)$ such that, for all $x>x_0$ and $n\ge1$,
\begin{eqnarray*}
\mathbb{P}\left(\sum_{k=1}^nX_k>x\right)\leq V(\varepsilon)(1+\varepsilon)^n \overline F(x).
\end{eqnarray*}
\end{lemma}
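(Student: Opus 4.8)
The plan is to adapt the classical Kesten bound for i.i.d.\ subexponential summands to the conditionally independent setting, proceeding by induction on $n$ in the spirit of the proof of Lemma \ref{lem1}. Write $S_n=\sum_{k=1}^n X_k$, $G_n(x)=\mathbb{P}(S_n>x)$, and set $A_n=\sup_{x>0}G_n(x)/\overline F(x)$; since the $X_k$ share the common marginal $F$ we have $G_1=\overline F$ and $A_1=1$, and the induction will keep each $A_n$ finite. The goal is to establish a linear recursion $A_{n+1}\le\beta A_n+\gamma$ with coefficients $\beta,\gamma$ that are \emph{uniform in} $n$, with $\beta$ as close to $1$ as desired; such a recursion iterates from $A_1=1$ to $A_n\le V\beta^{\,n}$, which is the assertion once the internal small parameter is chosen so that $\beta\le 1+\varepsilon$.

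To build the recursion, fix a cutoff $h=h(x)\in\mathcal H(F)$ and split $\{S_{n+1}>x\}$, with $S_{n+1}=S_n+X_{n+1}$, into the three regions $\{X_{n+1}>x-h\}$, $\{S_n>x-h\}$ and $\{h<S_n\le x-h,\ S_{n+1}>x\}$, exactly as in the decomposition \eqref{fj01}. The first region contributes $\overline F(x-h)\le(1+\varepsilon)\overline F(x)$ because $h\in\mathcal H(F)$, and the second contributes $G_n(x-h)\le(1+\varepsilon)A_n\overline F(x)$ by the definition of $A_n$ together with $\overline F(x-h)\sim\overline F(x)$. The cross term is the crux: conditioning on $\mathscr G$ and using conditional independence (Assumption \ref{asd1}), I would bound it by $\mathbb{E}\int_h^{x-h}\mathbb{P}(X_{n+1}>x-y\mid\mathscr G)\,\mathbb{P}(S_n\in dy\mid\mathscr G)$ and then split the integrand by $\textbf{1}(B_{n+1}(x-y))+\textbf{1}(\overline B_{n+1}(x-y))$.

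On the good events I would replace $\mathbb{P}(X_{n+1}>x-y\mid\mathscr G)$ by $r(x)\,\mathbb{P}(Y>x-y\mid\mathscr G)$ using \eqref{meq0} and the monotonicity of $r$, where $Y\sim F$ is chosen independent of $\mathscr G$ and of the $X_i$; by conditional independence this turns the good part into $r(x)\,\mathbb{P}(Y+S_n>x,\ h<S_n\le x-h)$, which splits as in \eqref{uni09} into $r(x)\overline F(x-h)\mathbb{P}(S_n>h)$ and $r(x)\int_h^{x-h}\mathbb{P}(S_n>x-z)F(dz)$. Bounding $\mathbb{P}(S_n>\cdot)\le A_n\overline F(\cdot)$ by the induction hypothesis, the first is $A_n\,r(x)\overline F(h)\overline F(x-h)=o(A_n\overline F(x))$ by Assumption \ref{asd3}(ii), and the second is $A_n\,r(x)\int_h^{x-h}\overline F(x-z)F(dz)=o(A_n\overline F(x))$ by Assumption \ref{asd3}(iii). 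The bad events contribute at most $\mathbb{P}(\overline B_{n+1}(h(x)))=o(\overline F(x))$ by Assumption \ref{asd3}(i). The key bookkeeping point is that every resulting threshold in $x$ depends only on $\varepsilon$, not on $n$, precisely because Assumption \ref{asd3}(i) is uniform in the index $i$ while parts (ii) and (iii) are index-free.

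Collecting the three regions gives $G_{n+1}(x)\le\bigl(A_n(1+c\varepsilon)+(1+2\varepsilon)\bigr)\overline F(x)$ for all $x$ exceeding some $x_*(\varepsilon)$, where $c$ is an absolute constant; for $x\le x_*(\varepsilon)$ one has trivially $G_{n+1}(x)/\overline F(x)\le 1/\overline F(x_*(\varepsilon))=:D$. Hence $A_{n+1}\le(1+c\varepsilon)A_n+(1+2\varepsilon+D)$, a recursion with $n$-independent coefficients, and iterating from $A_1=1$ yields $A_n\le V(\varepsilon)(1+c\varepsilon)^n$; running the argument with $\varepsilon/c$ in place of $\varepsilon$ delivers the stated exponent $1+\varepsilon$. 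I expect the main obstacle to be exactly the uniform-in-$n$ handling of the cross term: verifying that introducing the auxiliary $Y\sim F$ legitimately converts the conditional tail of $X_{n+1}$ into an integral against $F$ to which Assumption \ref{asd3}(iii) applies, while ensuring that the growth constant multiplying $A_n$ stays $1+c\varepsilon$ and that no error threshold drifts with $n$. (Alternatively, one may verify directly that the hypotheses of Lemma 2.1 in Foss and Richards (2010) are met and invoke it; the induction above is essentially an unpacking of that result in the present special case.)
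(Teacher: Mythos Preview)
Your argument is correct, but note that the paper does not actually prove this lemma: it simply states that the result is a particular case of Lemma~2.1 in Foss and Richards (2010) and invokes it without further comment. Your induction is precisely the Kesten-type argument underlying that lemma, adapted to the conditionally independent setting via Assumptions~\ref{asd1} and~\ref{asd3}, and you yourself observe this in your closing parenthetical. So the two approaches agree in substance; you have just chosen to unpack the cited result rather than quote it. The payoff of writing it out is that one sees explicitly where the uniformity in $n$ comes from---namely from the uniformity clause in Assumption~\ref{asd3}(i) and the index-free nature of parts (ii) and (iii)---whereas the paper's one-line citation leaves that verification to the reader.
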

The last lemma is well known and can be found in Ross (1983) etc.
\begin{lemma}\label{lem5}
Let $\{N(t):~t\ge0\}$ be a nonhomogeneous Poisson process with an intensity function $\lambda(t)$ and arrival times $\{\sigma_k,k\ge1\}$.
For any fixed $t>0$ and $n=1,2,\cdots$, given $N(t)=n$, the random vector $(\sigma_1,\sigma_2,\cdots,\sigma_n)$ is equal in distribution to the random vector $(Y_{(1:n)},Y_{(2:n)},\cdots,Y_{(n:n)})$, where $Y_{(1:n)},Y_{(2:n)},\cdots,Y_{(n:n)}$ are the order statistics of independent and identically distributed r.v.s $Y_1,\cdots,Y_n$ with a common density function \begin{equation}\label{nonhom1}g_t(s)=\frac{\lambda(s)}{\Lambda(t)},~0\leq s\leq t,\end{equation}
where $\Lambda(t)=\int_{0}^{t}\lambda(u)du,~t\geq 0.$
\end{lemma}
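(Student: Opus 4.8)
The plan is to prove the assertion by directly computing the conditional joint density of the arrival vector $(\sigma_1,\dots,\sigma_n)$ given $\{N(t)=n\}$ and recognizing it as the joint density of the order statistics of $n$ i.i.d. variables with density $g_t$. First I would write down, for $0\le s_1<\cdots<s_n\le t$, the joint density of the event that the process has exactly $n$ points in $[0,t]$ located near $s_1,\dots,s_n$. Using the defining properties of a nonhomogeneous Poisson process (independent increments, with the number of points in an interval $[a,b]$ being Poisson with mean $\Lambda(b)-\Lambda(a)$), the probability of one point in each infinitesimal interval $[s_k,s_k+ds_k)$ and no points elsewhere in $[0,t]$ is
\begin{equation*}
\Big(\prod_{k=1}^n\lambda(s_k)\,ds_k\Big)\,e^{-\Lambda(t)},
\end{equation*}
since the exponents of the ``no point'' probabilities over the complementary intervals sum to $-\int_0^t\lambda(u)\,du=-\Lambda(t)$. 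This yields the joint density of $(\sigma_1,\dots,\sigma_n)$ on the event $\{N(t)=n\}$, namely $\big(\prod_{k=1}^n\lambda(s_k)\big)e^{-\Lambda(t)}$ on the simplex $0\le s_1<\cdots<s_n\le t$.

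Next I would divide by $\mathbb{P}(N(t)=n)=\Lambda(t)^n e^{-\Lambda(t)}/n!$, the total probability of $\{N(t)=n\}$ (equivalently obtained by integrating the above density over the simplex). The $e^{-\Lambda(t)}$ factors cancel, giving the conditional density
\begin{equation*}
f_{\sigma_1,\dots,\sigma_n\mid N(t)=n}(s_1,\dots,s_n)=\frac{n!\prod_{k=1}^n\lambda(s_k)}{\Lambda(t)^n}=n!\prod_{k=1}^n g_t(s_k),\qquad 0\le s_1<\cdots<s_n\le t,
\end{equation*}
and zero otherwise. The final step is to compare this with the standard joint density of the order statistics $(Y_{(1:n)},\dots,Y_{(n:n)})$ of i.i.d. variables $Y_1,\dots,Y_n$ each with density $g_t$, which is exactly $n!\prod_{k=1}^n g_t(s_k)$ on the same simplex. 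Since the two densities coincide, the two random vectors are equal in distribution, which is the claim.

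The density comparison and the normalization are routine; the one step that genuinely needs care is the derivation of the joint density of $(\sigma_1,\dots,\sigma_n)$ on $\{N(t)=n\}$, since the infinitesimal argument above is heuristic. To make it rigorous one should either integrate increments over a genuine partition of $[0,t]$ and pass to the limit, or---more cleanly---invoke the time-change representation $N(t)\stackrel{d}{=}\widetilde N(\Lambda(t))$, where $\widetilde N$ is a rate-one homogeneous Poisson process. Under this representation the known uniform order-statistics property for $\widetilde N$ transfers the claim to the nonhomogeneous setting, with the change of variables $u=\Lambda(s)$ converting the uniform density on $[0,\Lambda(t)]$ into $g_t(s)=\lambda(s)/\Lambda(t)$. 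I expect this time-change route to be the shortest rigorous path, and would present it as the main argument if the informal density computation is deemed insufficient.
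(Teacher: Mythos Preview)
Your argument is correct: computing the joint density of $(\sigma_1,\dots,\sigma_n)$ on $\{N(t)=n\}$ via independent increments, dividing by $\mathbb{P}(N(t)=n)$, and matching with the standard order-statistics density $n!\prod_k g_t(s_k)$ is the classical derivation, and your suggested time-change $N(t)=\widetilde N(\Lambda(t))$ is indeed the cleanest way to make the infinitesimal step rigorous.

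As for comparison with the paper: there is essentially nothing to compare. The paper does not prove this lemma at all; it simply states that the result ``is well known and can be found in Ross (1983) etc.'' and moves on. Your write-up therefore supplies strictly more than the paper does. If anything, you could shorten it to a one-line citation in the spirit of the paper, but if a self-contained proof is desired, either of your two routes (direct density computation or time-change to a homogeneous process) is standard and acceptable.
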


Now we are standing in a position to prove Theorem \ref{th0}.

\begin{proof}[Proof of Theorem \ref{th0}.]   By copying the proof of Theorem 2.2 of Cheng and Cheng (2017),  (\ref{rws}) follows from Lemmas \ref{lem1} and \ref{lem2} immediately.

 \end{proof}

At the end of this paper, we give the proof of Theorem \ref{th1}.

\begin{proof}[Proof of Theorem \ref{th1}.]

We will prove the theorem along the technical line of the proof of Theorem 3.1 in Tang (2005). Obviously, it follows from (\ref{ruin0}) and (\ref{ruin1}) that
\begin{eqnarray*}
\psi_r(x,T)&=&\mathbb{P}\lo( e^{-rt}S_r(t)<0 \mbox{~for some~} t\in (0,T] |S_r(0)=x\ro)\nonumber\\
&=&\mathbb{P}\lo(   \sum_{k=1}^{N(t)}X_ke^{-r\sigma_k}>x+\int_{0}^{t}e^{-rs}C(ds) \mbox{~for some~} t\in (0,T] \Bigg|S_r(0)=x\ro),\label{pru1}
\end{eqnarray*}
hence, we have
\begin{eqnarray}\label{pru2}
\mathbb{P}\lo(\sum_{k=1}^{N(T)}X_ke^{-r\sigma_k}>x+\int_{0}^{T}e^{-rs}C(ds)\ro)\leq \psi_r(x,T)\leq\mathbb{P}\lo(\sum_{k=1}^{N(T)}X_ke^{-r\sigma_k}>x\ro).
\end{eqnarray}
First, we estimate the upper bound $\mathbb{P}\lo(\sum_{k=1}^{N(T)}X_ke^{-r\sigma_k}>x\ro)$.
Let $Y_1,Y_2,\cdots$ be independent and identically distributed r.v.s with a common density function $g_T(\cdot)$ defined in (\ref{nonhom1}), which is independent of r.v.s $X_1,X_2,\cdots$. The corresponding distribution function is denoted by $$G_T(x)=\int_0^xg_T(u)du,~ ~x\geq 0.$$
By Lemma \ref{lem5}, we have
\begin{eqnarray}\label{51}
\mathbb{P}\lo(\sum_{k=1}^{N(T)}X_ke^{-r\sigma_k}>x\ro)&=&
\sum_{n=1}^{\infty}\mathbb{P}\lo(\sum_{k=1}^{n}X_ke^{-r\sigma_k}>x|N(T)=n\ro)\mathbb{P}(N(T)=n)\nonumber\\
&=&\sum_{n=1}^{\infty}\mathbb{P}\lo(\sum_{k=1}^{n}X_ke^{-rY_{(k:n)}}>x\ro)\mathbb{P}(N(T)=n),
\end{eqnarray}
where  $(Y_{(1:n)},Y_{(2:n)},\cdots,Y_{(n:n)})$ are the order statistics of $Y_1,\cdots,Y_n$ for any $n\geq 1$.
For any fixed $\varepsilon_0>0$ and $n\geq1$, noting that
  $$e^{-rT}\leq e^{-rY_{(k:n)}}\leq e^{-rY_{(1:n)}},~k=1,\cdots,n,$$ it follows from Lemma \ref{lem4} that 
\begin{eqnarray*}\label{52}
 &&\mathbb{P}\lo(\sum_{k=1}^{n}X_ke^{-rY_{(k:n)}}>x\ro) \leq\mathbb{P}\lo(\sum_{k=1}^{n}X_k>xe^{rY_{(1:n)}}\ro) \nonumber\\
&=& \int_{0}^{T}\mathbb{P}\lo(\sum_{k=1}^{n}X_k>xe^{ru}\ro)n(1- G_T(u))^{n-1}g_T(u)du \nonumber\\
&\leq& \frac{V(\varepsilon_0)}{\Lambda(T)} n(1+\varepsilon_0)^n
\lo(\int_{0}^{T}\overline F(xe^{ru})\lambda(u)du\ro).
\end{eqnarray*}
Hence, combining with 
\begin{eqnarray*}\label{53}
\sum_{n=1}^n\frac{V(\varepsilon_0)}{\Lambda(T)} n(1+\varepsilon_0)^n
\mathbb{P}(N(T)=n)=\frac{V(\varepsilon_0)}{\Lambda(T)}EN(T)(1+\varepsilon_0)^{N(T)}
 <\infty.
\end{eqnarray*}
by the dominated convergence theorem, it follows from (\ref{51}) and Theorem \ref{th0} that
\begin{eqnarray}\label{54}
 &&\mathbb{P}\lo(\sum_{k=1}^{N(T)}X_ke^{-r\sigma_k}>x\ro)\sim\sum_{n=1}^{\infty}\sum_{k=1}^{n}\mathbb{P}(
X_ke^{-rY_{(k:n)}}>x)\mathbb{P}(N(T)=n)\nonumber\\
&=&\sum_{n=1}^{\infty}\sum_{k=1}^{n}\mathbb{P}(
X_1e^{-rY_{(k:n)}}>x)\mathbb{P}(N(T)=n)\nonumber\\
&=&\sum_{n=1}^{\infty}\sum_{k=1}^{n}\lo(\int_{0}^{T}\mathbb{P}(
X_1e^{-ru}>x)\frac{n!}{(k-1)!(n-k)!}G_T^{k-1}(u)(1-G_T(u))^{n-k}g_T(u)du\ro)\mathbb{P}(N(T)=n)\nonumber\\
&=&\sum_{n=1}^{\infty}n\mathbb{P}(N(T)=n)\lo(\int_{0}^{T}\overline F(xe^{ru})\sum_{k=1}^{n}\frac{(n-1)!}{(k-1)!(n-k)!}G_T^{k-1}(u)(1-G_T(u))^{n-k}g_T(u)du\ro)\nonumber\\
&=&\frac{1}{\Lambda(T)}\sum_{n=1}^{\infty}n\mathbb{P}(N(T)=n)\lo(\int_{0}^{T}\overline F(xe^{ru})\lambda(u)du\ro) \nonumber\\
&=&\int_{0}^{T}\overline F(
xe^{ru})\lambda(u)du. \label{55}
\end{eqnarray}
Now we estimate the lower bound: Note that
$$\int_{0}^{T}\overline F((x+y)e^{ru})\lambda(u)du\sim \int_{0}^{T}\overline F(xe^{ru})\lambda(u)du$$
holds for all $y>0$. It follows from (\ref{55}) that the r.v.
 $\sum_{k=1}^{N(T)}X_ke^{-r\sigma_k}$ follows a long-tailed distribution. Hence, using the dominated convergence theorem, it follows from the independence between $\int_{0}^{T}e^{-rs}C(ds)$ and $\sum_{k=1}^{N(T)}X_ke^{-r\sigma_k}$ that
\begin{eqnarray}
&&\lim\frac{\mathbb{P}\lo(\sum_{k=1}^{N(T)}X_ke^{-r\sigma_k}>x+\int_{0}^{T}e^{-rs}C(ds)\ro)}
{\mathbb{P}\lo(\sum_{k=1}^{N(T)}X_ke^{-r\sigma_k}>x\ro)}\nonumber\\
&=&\int_{0}^{\infty}\lim\frac{\mathbb{P}\lo(\sum_{k=1}^{N(T)}X_ke^{-r\sigma_k}>x+y\ro)}
{\mathbb{P}\lo(\sum_{k=1}^{N(T)}X_ke^{-r\sigma_k}>x\ro)}\mathbb{P}\lo(\int_{0}^{T}e^{-rs}C(ds)\in dy\ro)=1,\label{pru5}
\end{eqnarray}
Hence, (\ref{50}) follows from (\ref{pru2}), (\ref{55}) and (\ref{pru5}). 
\end{proof}

\end{CJK*}

\begin{thebibliography}{99}



\bibitem{c2007} Y. Chen and K.W. Ng. The Ruin probability of the renewal model with
constant interest force and negatively dependent
heavy-tailed claims. {\it Insurance. Math. Econom.}, {\bf 40} (2007), 415-423.

\bibitem{cheng} F. Cheng and D. Cheng. Randomly weighted sums of dependent subexponential random variables with applications to risk theory. {\sl Scandinavian Actuarial Journal} (2017), Preprint, http://dx.doi.org/10.1080/03461238.2017.1329160.


\bibitem{cg1994} D. Cline and G. Samorodnitsky. Subexponential of the product of independent random variables. {\it Stoch. Process. Appl.}, {\bf 49} (1994), 75-98.


\bibitem{EmbKluMik1997} P. Embrechts, C.~Kl\"{u}ppelberg, and T.~Mikosch.
Modelling extremal events for insurance and finance. Springer, 1997.


\bibitem{FosKorZac2013} S. Foss, D. Korshunov, and S. Zachary. An Introduction to
heavy-tailed and subexponential distributions. Springer, Second Edition, 2013.

\bibitem{Fos2010} S. Foss and A. Richards. On sums of conditionally independent subexponential
random variables. {\it Mathematics of Operations Reseach}, {\bf 35} (2010), 102--119.

\bibitem{gao2012}Q. Gao, P. Gu and N. Jin. Asymptotic behavior of the finite-time ruin probability with constant interest force and WUOD heavy-tailed claims. Asia-Pacific Journal of Risk and Insurance.   6(2012), Iss. 1, Article 5.

\bibitem{K2008} F. Kong and G. Zong. The finite-time ruin probability for ND claims with constant interest force. {\it Statist. Probab. Lett.}, {\bf 78} (2008), 3103--3109.


\bibitem{r1983} M. Ross. Stochastic processes. John Wiley, New Tork. 1983.

\bibitem{T2005} Q. Tang. The finite ruin probability of the compound poisson model with constant interest force. {\it J. Appl. Probab.}, {\bf 42} (2005), 608--619.
\bibitem{Tang2003}Q. Tang and G. Tsitsiashvili, Random weighted sums of subexponential random variables with application
to ruin theory. {\it Extremes}, \textbf{6}(2003), 171-188.

\bibitem{}Q. Tang and Z. Yuan, Randomly weighted sums of subexponential random variables with application to capital allocation, {\em Extremes},
{\bf 17}(2014), 467-493.

\bibitem{wang2008}D. Wang, Finite-time ruin probability with heavy-tailed claims and constant
interest rate, Stochastic Model, 24(2008), 41-57.
\bibitem{yangw2012}Y. Yang and K. Wang, Uniform asymptotics for the finite-time and infinite-time ruin probabilities in a dependent risk model with constant interest rate and heavy-tailed claims, Lith. Math. J. 52(2012) 111-121.

\bibitem{Y2010} Y. Yang and Y. Wang. Asymptotics for ruin probability of some negatively dependent risk
models with a constant interest rate and dominatedly-varying-tailed
claims. {\it Statist. Probab. Lett.}, {\bf 80} (2010), 143-154.

\end{thebibliography}
\end{document}